\newtheorem{theorem}{Theorem}
\newtheorem{lemma}[theorem]{Lemma}
\newtheorem{proposition}[theorem]{Proposition}
\newtheorem{conjecture}[theorem]{Conjecture}
\newtheorem{problem}[theorem]{Problem}
\newtheorem{observation}[theorem]{Observation}
\newcommand\size[1] {\left|{#1}\right|}
\newcommand\Setx[1] {\left\{{#1}\right\}}
\newcommand\sub {\subseteq}
\DeclareMathOperator{\Ass}{Ass}
\DeclareMathOperator{\depth}{depth}
\newcommand{\s}{\mathord}
\newcommand{\sseqx}[1]{\hspace{0.8pt}#1}
\newcommand{\sseq}[1]{(\sseqx{#1})}
\newcommand{\repl}[2]{#1^#2}
\newcommand{\cart}{\Box}
\newcommand{\upto}[1]{[#1]}
\newcommand{\pattsp}{\hspace*{0.5pt}}
\newcommand{\patt}[3]{\mbox{$#1$\pattsp$\cdot$\pattsp $#2$\pattsp$\cdot$\pattsp $#3$}}
\newcommand{\ZZ}{\mathbb Z}
\newenvironment{xcase}[1]%
{\vspace{-1mm}\par\noindent\xcasehdr{#1}\upshape\vspace{2mm}\par\noindent}%
{\vspace{2mm}}
\noindent \xsubcasehdr{#1}\upshape
\par\vspace{2mm}}
\newcommand{\hf}{\hspace*{0pt}\hspace{\fill}\hspace*{0pt}}
\newcommand{\fig}[1]{\includegraphics[page=#1]{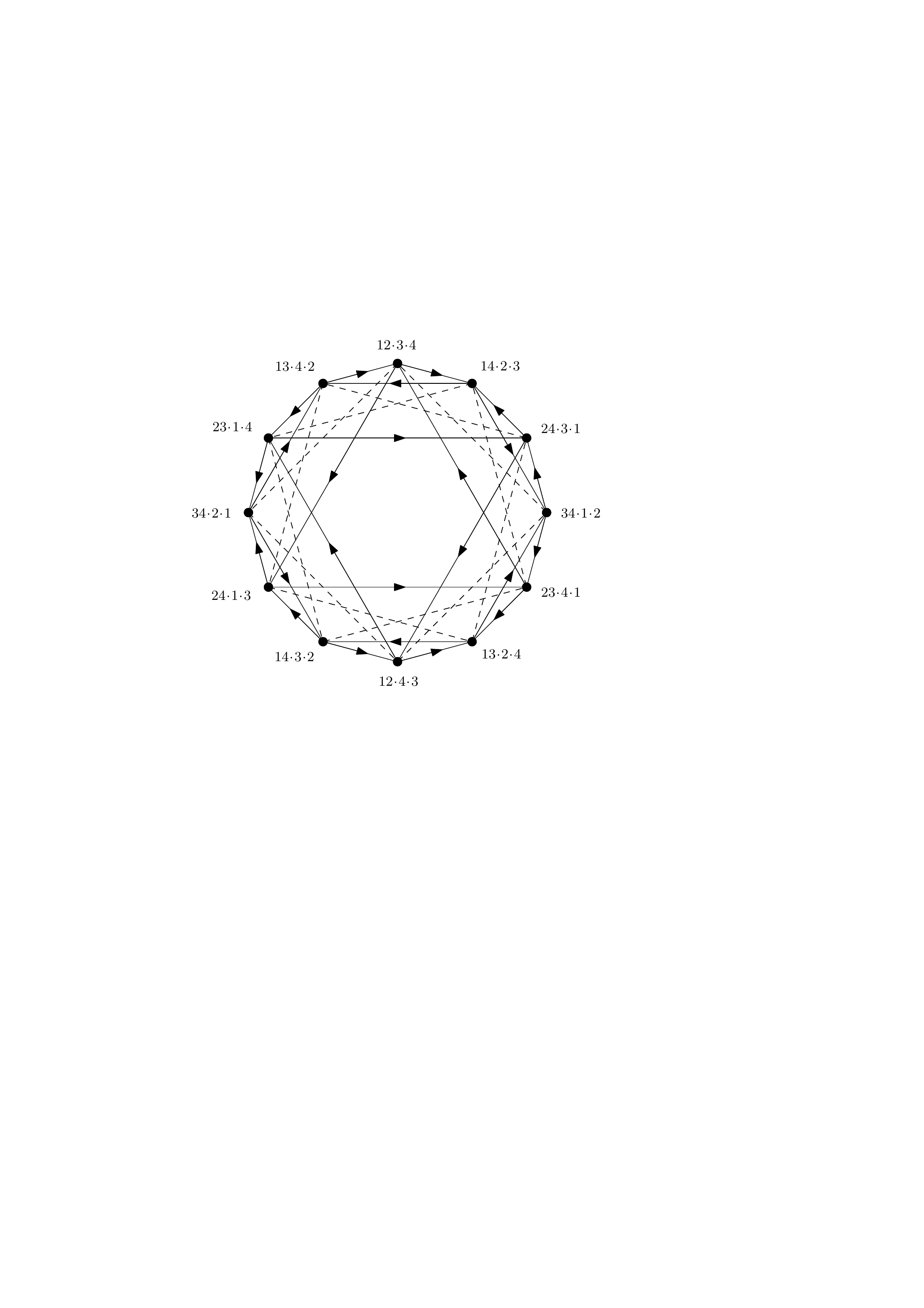}}%
\newcommand{\sfig}[2]{\subfloat[#2]{\fig{#1}}}%
\title{\textbf{Replication in critical graphs\\and the persistence of
    monomial ideals\thanks{Research partially supported by the
      TEOMATRO grant ANR-10-BLAN 0207 ``New Trends in Matroids: Base
      Polytopes, Structure, Algorithms and Interactions''.}}}
\author{Tom\'{a}\v{s} Kaiser$^1$\and Mat\v{e}j Stehl\'{\i}k$^2$ \and Riste
  \v{S}krekovski$^3$}
\date{}
\begin{document}
\maketitle
\footnotetext[1]{Department of Mathematics, Institute for Theoretical
  Computer Science (CE-ITI) and European Centre of Excellence
  NTIS---New Technologies for Information Society, University of West
  Bohemia, Univerzitn\'{\i}~8, 306~14~Plze\v{n}, Czech
  Republic. E-mail: \texttt{kaisert@kma.zcu.cz}. Supported by project
  P202/12/G061 of the Czech Science Foundation.}%
\footnotetext[2]{UJF-Grenoble 1 / CNRS / Grenoble-INP, G-SCOP UMR5272
  Grenoble, F-38031, France. E-mail:
  \texttt{matej.stehlik@g-scop.inpg.fr}.}%
\footnotetext[3]{Department of Mathematics, University of Ljubljana,
  Ljubljana \& Faculty of Information Studies, Novo Mesto \& FAMNIT,
  University of Primorska, Koper, Slovenia. E-mail:
  \texttt{skrekovski@gmail.com}. Partially supported by ARRS Program
  P1-0383 and by the French-Slovenian bilateral project
  BI-FR/12-13-Proteus-011.}%


\begin{abstract}
  Motivated by questions about square-free monomial ideals in
  polynomial rings, in 2010 Francisco et al.\ conjectured that for
  every positive integer $k$ and every $k$-critical (i.e., critically
  $k$-chromatic) graph, there is a set of vertices whose replication
  produces a $(k+1)$-critical graph. (The replication of a set $W$ of
  vertices of a graph is the operation that adds a copy of each vertex
  $w$ in $W$, one at a time, and connects it to $w$ and all its
  neighbours.)

  We disprove the conjecture by providing an infinite family of
  counterexamples. Furthermore, the smallest member of the family
  answers a question of Herzog and Hibi concerning the depth functions
  of square-free monomial ideals in polynomial rings, and a related
  question on the persistence property of such ideals.
\end{abstract}


\section{Introduction}
\label{sec:intro}

An investigation of the properties of square-free monomial ideals in
polynomial rings led Francisco et al.~\cite{FHT-conjecture} to an
interesting question about replication in colour-critical graphs that
we answer in the present paper.

In the area of graph colourings, constructions and properties of
colour-critical graphs are a classical subject (see, e.g.,
\cite[Section 14.2]{BM-graph}). The replication of a set of vertices,
whose definition we will recall shortly, is a natural operation in
this context. It is also of central importance for the theory of
perfect graphs (cf.~\cite[Chapter 65]{Sch-combinatorial}).

For the terminology and notation of graph theory, we follow Bondy and
Murty~\cite{BM-graph}. We deal with graphs without parallel edges and
loops. The vertex set and the edge set of a graph $G$ are denoted by
$V(G)$ and $E(G)$, respectively.

A graph $G$ is \emph{$k$-chromatic} if its chromatic number is $k$. It
is \emph{$k$-critical} if $G$ is $k$-chromatic and $G-v$ is
$(k-1)$-colourable for each vertex $v$ of $G$. Furthermore, $G$ is
\emph{$k$-edge-critical} if $G$ is $k$-chromatic and every proper
subgraph of $G$ is $(k-1)$-colourable.

\emph{Replicating} (also \emph{duplicating}) a vertex $w\in V(G)$
means adding a copy (or \emph{clone}) $w'$ of $w$ and making it
adjacent to $w$ and all its neighbours. To replicate a set $W\sub
V(G)$, we replicate each vertex $w\in W$ in sequence. The resulting
graph $\repl G W$ is independent of the order in which the individual
vertices are replicated.

Francisco et al.~\cite{FHT-conjecture} posed the following conjecture:
\begin{conjecture}\label{conj:main}
  For any positive integer $k$ and any $k$-critical graph $G$, there
  is a set $W\sub V(G)$ such that $\repl G W$ is $(k+1)$-critical.
\end{conjecture}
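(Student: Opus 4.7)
The plan is to reformulate the replication operation in colouring-theoretic language and then to construct $W$ by a minimality argument. A proper colouring of $\repl G W$ with at most $k$ colours corresponds to an assignment giving each $v\in V(G)\sm W$ a colour in $\upto k$ and each $w\in W$ a $2$-element subset of $\upto k$, with adjacent vertices receiving disjoint colour sets. Hence $\chi(\repl G W)\le k$ iff $G$ has a $k$-colouring $c$ in which every $w\in W$ has at least one ``free'' colour in $\upto k\sm c(N[w])$. Conjecture~\ref{conj:main} therefore asks for $W\sub V(G)$ such that no $k$-colouring of $G$ leaves every $w\in W$ with a free colour, yet every single-vertex deletion from $\repl G W$ restores one.

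The natural starting point is to take $W$ minimal with $\chi(\repl G W)\ge k+1$. Minimality gives $\chi(\repl G {W\sm\Setx w})\le k$ for every $w\in W$, and since $\repl G W - w'$ coincides with $\repl G {W\sm\Setx w}$ while $w$ and $w'$ are true twins in $\repl G W$, this yields $\chi(\repl G W - x)\le k$ for every $x$ that is either a replicated original or a clone. Minimality also forces $\chi(\repl G W)=k+1$ exactly, because removing a single clone drops the chromatic number by at most one, so $\chi(\repl G W)\le \chi(\repl G {W\sm\Setx w})+1\le k+1$. Thus the critical condition comes for free at all vertices arising from the replication itself.

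The remaining case, and the main obstacle, is to show $\chi(\repl G W - v)\le k$ for every $v\in V(G)\sm W$. For this I would invoke the $k$-criticality of $G$: take a $(k-1)$-colouring $c_0$ of $G-v$ and try to extend it to a $k$-colouring of $\repl G W - v$ by assigning colours to the clones, possibly using the previously unused colour~$k$. The extension amounts to properly colouring the clone subgraph (which is isomorphic to $G[W]$) under the constraints that $c_0$ imposes on $c_0(N[w]\sm\Setx v)$ for each $w\in W$. If the colourings of $G-v$ are so rigid that every extension conflicts somewhere, this approach fails, and controlling the extension uniformly over all $v\in V(G)\sm W$ and all minimal $W$ is the heart of the problem. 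I expect that in sufficiently rigid critical graphs no choice of $W$ admits such an extension, which is likely the source of the counterexamples announced in the abstract.
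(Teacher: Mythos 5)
There is a fundamental obstruction here: the statement you are asked to prove is a conjecture of Francisco et al.\ that this paper \emph{disproves}, so no proof strategy can be completed. The paper's Section~\ref{sec:counterexample} shows that every member of Gallai's family of $4$-regular $4$-edge-critical graphs $H_n$ ($n\geq 4$) is a counterexample: for every $W\sub V(H_n)$, either $\repl{H_n}W$ contains a proper subgraph that is already not $4$-colourable (so it has chromatic number at least $5$ but is not $5$-critical), or $\repl{H_n}W$ is $4$-colourable (shown via an encoding of $W$ as a sign sequence and an analysis of colouring ``patterns'' propagated around the cyclic structure of $H_n$). Your own closing sentence correctly anticipates this, and to your credit your reduction pinpoints exactly where any proof must fail: the minimality argument does handle $\chi(\repl G W)=k+1$ and the deletion of clones and of replicated originals, but it gives no control over $\chi(\repl G W - v)$ for $v\in V(G)\sm W$, and for the graphs $H_n$ that control is genuinely unavailable for every admissible $W$. (Note also that restricting to \emph{minimal} $W$ would not even be a loss-free reduction a priori, since the conjecture only asks for some $W$; but this is moot given that the statement is false.)

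One smaller technical slip in your setup: the equivalence ``$\chi(\repl G W)\le k$ iff $G$ has a $k$-colouring $c$ in which every $w\in W$ has a free colour in $\upto k\sm c(N[w])$'' is only correct as stated when $W$ is an independent set. If $w_1,w_2\in W$ are adjacent, their clones are also adjacent, so the two colours assigned to $w_1$ (original plus free colour) must be disjoint from the two assigned to $w_2$; it is not enough that each has \emph{some} free colour avoiding $c(N[w_i])$, because the two free colours could coincide. The correct formulation is the one you give first, in terms of disjoint $2$-element colour sets on $W$ (equivalently, a proper ``multicolouring'' assigning $2$ colours to vertices of $W$ and $1$ colour elsewhere). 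This is in fact how the paper's Proposition~\ref{p:four} operates: the patterns $\pi_i$ record precisely which pair of colours is assigned to the unique replicated vertex in each column $C_i$ of $H_n$.
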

In Section~\ref{sec:counterexample} of the present paper, we disprove
the conjecture by showing that each member of an infinite family of
4-critical graphs constructed by Gallai~\cite{Gal-kritische} is a
counterexample. In Section~\ref{sec:algebra}, we discuss the algebraic
properties of the smallest member of this family and show that it also
answers two open questions concerning square-free monomial ideals in
polynomial rings. Thus, the result provides a nice example of
interplay and useful exchange between algebra and combinatorics.


\section{A counterexample}
\label{sec:counterexample}

Gallai's construction~\cite{Gal-kritische} of an infinite family of
4-regular 4-edge-critical graphs provided the first example of a
$k$-edge-critical graph without vertices of degree $k-1$. The
definition can be expressed as follows.

For a positive integer $n$, let $\upto n$ denote the set
$\Setx{0,\dots,n-1}$. Let $P_n$ be a path with vertex set $\upto
n$, with vertices in the increasing order along $P_n$. Let $K_3$
be the complete graph whose vertex set is the group $\ZZ_3$.

For $n\geq 4$, we define $H_n$ as the graph obtained from the
Cartesian product $P_n \cart K_3$ by adding the three edges
joining $(0,j)$ to $(n-1,-j)$ for $j\in\ZZ_3$. (See
Figure~\ref{fig:graph}a.)

The 4-regular graphs $H_n$ are interesting in various ways; for
instance, they embed in the Klein bottle as quadrangulations
(cf. Figure~\ref{fig:graph}b). In this section, we show that Gallai's
graphs are counterexamples to Conjecture~\ref{conj:main}:
\begin{theorem}\label{t:main}
  For any $n\geq 4$ and any $W\sub V(H_n)$, the graph $\repl{H_n}W$ is
  not $5$-critical.
\end{theorem}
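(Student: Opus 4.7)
My plan is a contradiction argument. Suppose some $W\sub V(H_n)$ makes $\repl{H_n}{W}$ 5-critical. First, I record a convenient reformulation: $\chi(\repl{H_n}{W})\leq 4$ is equivalent to the existence of a proper multicoloring $\map{S}{V(H_n)}{2^{\{1,2,3,4\}}}$ of $H_n$ with $|S(v)|=2$ for $v\in W$ and $|S(v)|=1$ otherwise, subject to $S(u)\cap S(v)=\emptyset$ for every $uv\in E(H_n)$. Under this correspondence, deleting the clone of $w\in W$ from $\repl{H_n}{W}$ amounts to dropping $w$ from the demand set, while deleting a vertex $v\notin W$ amounts to multicoloring $H_n-v$ with the unchanged $W$. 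The 5-criticality assumption therefore says: no $(4,W)$-multicoloring of $H_n$ exists, yet a $(4,W\sm\{w\})$-multicoloring of $H_n$ does exist for every $w\in W$, and a $(4,W)$-multicoloring of $H_n-v$ does exist for every $v\notin W$.

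Next I would use the structure of $H_n$. For $n\geq 4$, the triangles of $H_n$ are exactly the columns $\Setx{(i,0),(i,1),(i,2)}$ (a quick check using the description of $H_n$), and $H_n$ is 4-edge-critical by Gallai's construction; in particular $H_n-v$ is 3-colorable for every vertex $v$. Starting from such a 3-coloring with palette $\Setx{1,2,3}$ and assigning color 4 to $v$ yields a 4-coloring $c$ of $H_n$ where color 4 appears only at $v$. In this coloring, every $u$ outside the closed neighborhood of $v$ has all its neighbors colored in $\Setx{1,2,3}$, so the pair $\Setx{c(u),4}$ is available as a 2-color set at $u$. Consequently 4-multicolorings of $H_n$ abound for demand sets $W$ that are sufficiently ``spread out'' from a chosen $v$, and varying the underlying 3-coloring yields a rich stock of candidate multicolorings.

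The main step is to pick a vertex $w\in W$ (if $W=\emptyset$, then $\repl{H_n}{W}=H_n$ is 4-chromatic and we are done) and take a $(4,W\sm\{w\})$-multicoloring $S$ of $H_n$ that exists by criticality. An extension of $S$ to a $(4,W)$-multicoloring exists as soon as $\bigcup_{u\in N(w)}S(u)\neq\Setx{1,2,3,4}$, which would furnish the desired contradiction. Hence the proof must rule out, for every admissible choice of $w\in W$ and accompanying multicoloring $S$, the ``saturated neighborhood'' condition that the four neighbors of $w$ jointly exhibit all four colors. I expect this to be the main obstacle; the twist edges $(0,j)(n-1,-j)$ are the principal source of difficulty, since they glue columns 0 and $n-1$ in a M\"obius-like fashion and propagate color forcings around the closure. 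The resolution likely combines a case analysis on the sequence of missing colors in the successive columns, local exchange arguments that reroute a color away from $N(w)$, and the Klein bottle quadrangulation viewpoint alluded to by the authors.
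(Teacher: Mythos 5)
Your write-up is a plan rather than a proof: the entire technical content of the theorem is deferred to a step you explicitly do not carry out. The setup is fine --- the translation of $4$-colourability of $\repl{H_n}W$ into a multicolouring of $H_n$ with $|S(v)|=2$ on $W$ is correct and is essentially the viewpoint the paper also adopts (its ``patterns'' record exactly which pair of colours sits on the replicated vertex of each column). But your main step, ``pick $w\in W$, take a $(4,W\sm\{w\})$-multicolouring $S$, and extend it to $w$ unless the four neighbours of $w$ exhaust all four colours,'' cannot be pushed through as stated, and not merely for technical reasons. For some choices of $W$ the graph $\repl{H_n}W$ genuinely fails to be $4$-colourable (e.g.\ $n$ odd and $W=R_0$, one of the cases in the paper's Lemma~\ref{l:five}); for such $W$ the saturated-neighbourhood condition holds for \emph{every} admissible $w$ and $S$, so no contradiction can be extracted from the extension step alone. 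The contradiction must instead come from the other half of criticality --- showing that $\repl{H_n}W-v$ is already non-$4$-colourable for some vertex $v$ --- and your proposal never engages with that half beyond listing it in the setup. So the argument has a structural gap, not just a missing computation: you need a dichotomy (either $\repl{H_n}W$ is $4$-colourable, or its non-$4$-colourability is witnessed by a proper subgraph), and you have set up neither horn.

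The part you label ``the main obstacle'' is in fact the whole theorem. The paper resolves it by encoding $W$ (after padding it to meet every column exactly once, Lemma~\ref{l:max}) as a sign sequence $\sigma^W$ over $\ZZ_3$, classifying the compatible pairs of column-patterns via an auxiliary mixed graph $D$, and proving by a transfer-type case analysis on $\sigma^W$ (using the order $\preceq$ and a small table of ``good'' and ``reversing'' seed sequences) that a suitable closed walk in $D$ --- hence a $4$-colouring --- always exists outside the exceptional cases of Lemma~\ref{l:five}. Your sentence about ``a case analysis on the sequence of missing colors in the successive columns'' points in the right direction, but none of it is executed, and the exceptional cases where the conclusion is ``not $4$-colourable but still not critical'' are not identified at all. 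As it stands the proposal does not constitute a proof.
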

It is interesting to note that by~\cite[Theorem~1.3]{FHT-conjecture},
Conjecture~\ref{conj:main} holds for graphs $G$ satisfying $\chi_f(G)
> \chi(G)-1$, where $\chi$ denotes the chromatic number and $\chi_f$
denotes the fractional chromatic number (see, e.g.,
\cite[Definition~3.8]{FHT-conjecture} for the definition). Since the
graphs $H_n$ are $4$-chromatic and their fractional chromatic number
equals $3$, they show that the bound in Theorem~1.3 of
\cite{FHT-conjecture} cannot be improved.

We will divide the proof of Theorem~\ref{t:main} into two
parts. First, we show that for certain sets $W$, the chromatic number
of $\repl{H_n}W$ is at least 5, but $\repl{H_n}W$ is not 5-critical
(Lemma~\ref{l:five}). We then prove that for any other set $W$,
$\repl{H_n}W$ is 4-chromatic (Proposition~\ref{p:four}).

Let $i\in\upto n$ and $j\in\ZZ_3$. The \emph{$i$-th column} of $H_n$
is the set $C_i = \Setx i \times \ZZ_3$. Similarly, the \emph{$j$-th
  row} of $H_n$ is $R_j = \upto n \times \Setx j$. The vertex in
$C_i\cap R_j$ is denoted by $v_{i,j}$. In accordance with the notation
introduced above, the clone of $v_{i,j}\in W$ in $\repl{H_n}W$ is
denoted by $v'_{i,j}$.

We introduce notation for certain subgraphs of $\repl{H_n}W$. Let
$i\in\upto n$. We define $X_i$ as the clique in $\repl{H_n}W$ on
the vertices in $C_i$ and their clones. Furthermore, $Y_i$ is the
induced subgraph of $\repl{H_n}W$ on $V(X_i)\cup V(X_{i+1})$ (addition
modulo $n$).

\begin{figure}
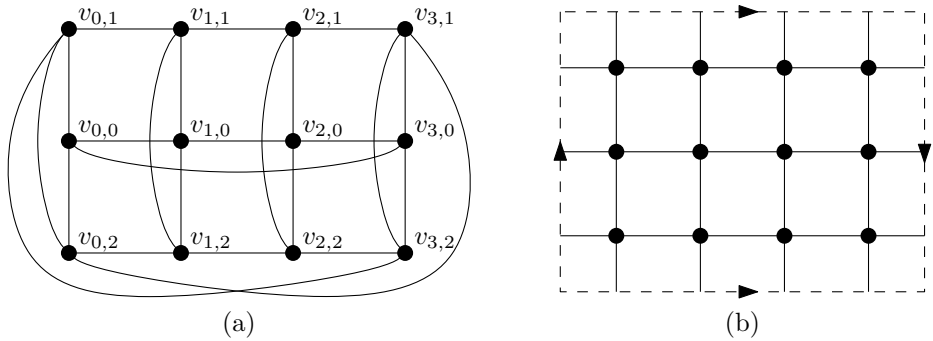

  \centering
  \hf\sfig{2}{}\hf\sfig{3}{}\hf
  \caption{(a) The graph $H_4$. (b) A drawing of $H_4$ as a
    quadrangulation of the Klein bottle. The opposite sides of the
    bounding rectangle are identified in such a way that the arrows
    match.}
  \label{fig:graph}
\end{figure}

\begin{lemma}\label{l:five}
  Let $n\geq 4$ and let $W\sub V(H_n)$. In each of the following
  cases, the graph $\repl{H_n}W$ has chromatic number at least $5$ and
  is not $5$-critical:
  \begin{enumerate}[\quad$(a)$]
  \item there is some $i\in\upto n$ such that the set $W\cap C_i$ has
    size at least 2,
  \item $W$ contains at least $n-1$ vertices of $R_0$ and $n$ is odd,
  \item the induced subgraph of $H_n$ on $W-R_0$ contains a path with
    at least $n$ vertices and $n$ is even.
  \end{enumerate}
\end{lemma}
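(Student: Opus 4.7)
The plan is, in each of the three cases, to exhibit a subgraph $K$ of $\repl{H_n}W$ with $\chi(K) \geq 5$ together with a vertex $v \in V(\repl{H_n}W) \sm V(K)$. This yields both conclusions of the lemma at once, since $\chi(\repl{H_n}W) \geq \chi(K) \geq 5$ and $\chi(\repl{H_n}W - v) \geq \chi(K) \geq 5$, so $\repl{H_n}W$ is not $5$-critical.

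In case~(a), replicating any two vertices of the triangle $C_i$ produces a clique $X_i$ of order at least $5$, which we take as $K$; any vertex outside $X_i$ (of which there are at least $3n - 3 \geq 9$) serves as $v$. For case~(b), take $K$ to be the subgraph of $\repl{H_n}W$ induced on $R_0$ together with the clones of $W \cap R_0$. Since $R_0$ is an $n$-cycle, $K$ is isomorphic either to $C_n[K_2]$ (when $|W \cap R_0| = n$) or to $C_n[K_2]$ with a single clone vertex deleted (when $|W \cap R_0| = n - 1$). In any attempted $4$-colouring of $K$, each pair $\{v_{i,0}, v'_{i,0}\}$ would receive a $2$-subset of $[4]$ and adjacent pairs would receive disjoint $2$-subsets; since the Kneser graph $K(4,2)$ is a perfect matching, the sequence of $2$-subsets would be forced to alternate between two complementary $2$-subsets around the $n$-cycle, which is impossible for odd~$n$, and when one clone is missing the same forced alternation on the remaining $n - 1$ columns leaves the solitary column-vertex adjacent to all four colours. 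Any vertex of $R_1 \cup R_2$ lies outside $V(K)$ and can be taken as $v$.

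Case~(c) is the main obstacle. We may assume $|W \cap C_i| \leq 1$ for every $i$ (else case~(a) applies); this forces the path $P$ in $H_n[W - R_0]$ to avoid the rungs $v_{i,1}v_{i,2}$, so $P$ lies on the outer $2n$-cycle of the M\"obius ladder $M_n := H_n[R_1 \cup R_2]$ and, having exactly $n$ vertices, visits each column of $H_n$ precisely once; moreover $W \cap R_0 = \emptyset$ under this assumption. The $K_{2,2}$ formed between consecutive replicated pairs on $P$ forces the $2$-subset $B_k = \{c(p_k), c(p'_k)\}$ used at the $k$-th vertex of $P$ to be disjoint from $B_{k+1}$ in any $4$-colouring, so the $B_k$ alternate between two complementary $2$-subsets of $[4]$; because $n$ is even, this gives $B_n = [4] \sm B_1$. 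Combining this with the column clique at the endpoint column of $P$ and an edge joining the non-replicated row-vertex there to $p_1$ or its clone (a row edge of $R_1$ or $R_2$ when $P$ uses one of the two transition edges of $M_n$'s outer cycle, or a twisted wraparound edge $v_{0,j}v_{n-1,-j}$ of $H_n$ when $P = R_1$ or $R_2$) then forces the non-replicated row-vertex to lie simultaneously in $B_1$ (from its column) and outside $B_1$ (from that edge), a contradiction. The vertex $v$ may be chosen as any vertex of $R_0$: since no vertex of $R_0$ is replicated, removing $v$ merely reduces one column $K_4$ to a triangle while preserving both the alternation and the incompatibility at the endpoint column. The most delicate step is a uniform treatment of the various admissible shapes of $P$ on the outer cycle of $M_n$.
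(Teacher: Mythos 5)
Your proof is correct and takes essentially the same route as the paper: in each case you exhibit a non-$4$-colourable subgraph avoiding some vertex, with (a) and (b) matching the paper's argument almost verbatim (forced alternation of colour pairs around the odd cycle $R_0$). In case (c) the paper simply invokes the rotational automorphism of $H_n$ to normalise the path to $R_1$ and then derives the same wrap-around contradiction at the vertex $v_{n-1,2}$ --- which is exactly your $u_{t-1}$ --- so your uniform treatment of the arcs of the M\"obius ladder is just a more explicit rendering of the paper's ``by symmetry'' step.
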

\begin{proof}
  (a) Suppose that $W \cap C_i$ has size at least 2, so $\size{V(X_i)}
  \geq 5$. Since $\repl{H_n}W$ contains the clique $X_i$ as a proper
  subgraph, it is neither 4-colourable nor 5-critical.

  (b) Without loss of generality, assume that $W$ contains
  $R_0-\Setx{v_{n-1,0}}$. Furthermore, suppose that $n$ is odd. For
  contradiction, let $c$ be a 4-colouring of $\repl{H_n}W$. By
  symmetry, the vertices $v_{0,0}$ and $v'_{0,0}$ may be assumed to
  have colours 1 and 2 in $c$. This forces the pairs of colours
  assigned to $v_{i,0}$ and $v'_{i,0}$ alternate between $\Setx{1,2}$
  and $\Setx{3,4}$ as $i$ increases. Hence, $v_{n-1,0}$ has neighbours
  of all four colours, a contradiction which shows that $\repl{H_n}W$
  is not 4-colourable. Because the argument involves only vertices in
  $R_0$ and their clones, it implies that, say, $\repl{H_n}W-v_{0,2}$
  is not 4-colourable. It follows that $\repl{H_n}W$ is not
  5-critical.

  (c) Suppose that $n$ is even and the induced subgraph of $W-R_0$
  contains a path with at least $n$ vertices. By symmetry, we may
  assume that $R_1 \sub W$. We prove that $\repl{H_n}W$ is not
  4-colourable. Suppose the contrary and consider a 4-colouring of
  $\repl{H_n}W$. An argument similar to the one used in part (b)
  implies that the vertices $v_{0,1}$, $v'_{0,1}$, $v_{n-1,1}$ and
  $v'_{n-1,1}$ have distinct colours. Since they have a common
  neighbour $v_{n-1,2}$, we obtain a contradiction. In the same manner
  as above, it follows that $\repl{H_n}W$ is not 5-critical.
\end{proof}

\begin{lemma}\label{l:max}
  If $W\sub H_n$ satisfies none of the conditions $(a)$--$(c)$ in
  Lemma~\ref{l:five}, then there is a set $Z$ such that $W\sub Z\sub
  V(H_n)$, $Z$ contains exactly one vertex from each $C_i$
  ($i\in\upto n$) and $Z$ still satisfies none of $(a)$--$(c)$.
\end{lemma}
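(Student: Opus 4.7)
The plan is to exhibit $Z$ explicitly, making for each column missed by $W$ a single choice of added vertex dictated by the parity of $n$. Since $(a)$ fails for $W$, every column $C_i$ satisfies $\size{W\cap C_i}\le 1$, so call $C_i$ \emph{free} when $W\cap C_i=\emptyset$. For every non-free column the vertex of $Z$ in $C_i$ is forced to equal the unique element of $W\cap C_i$; hence only the free columns require a choice, and any such choice will automatically yield a set $Z$ with exactly one vertex per column. In particular, $(a)$ will fail for $Z$ no matter how I choose.

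My intended choice is the following: for each free column $C_i$, set $z_i:=v_{i,1}$ if $n$ is odd and $z_i:=v_{i,0}$ if $n$ is even, and take $Z:=W\cup\Set{z_i}{C_i\text{ is free}}$.

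To verify that $(b)$ and $(c)$ also fail for $Z$, the key observation is that these two conditions are active in disjoint parities of $n$. When $n$ is odd, $(c)$ fails vacuously, and because every added vertex lies outside $R_0$, one has $Z\cap R_0=W\cap R_0$, which by the failure of $(b)$ for $W$ has size at most $n-2$. When $n$ is even, $(b)$ fails vacuously, and because every added vertex lies in $R_0$, one has $Z\setminus R_0=W\setminus R_0$; consequently $H_n[Z\setminus R_0]$ coincides with $H_n[W\setminus R_0]$ and therefore contains no path on $n$ or more vertices, by the failure of $(c)$ for $W$.

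There is no substantial obstacle here: the only conceptual point is the disjointness of the parity conditions built into $(b)$ and $(c)$, which lets me funnel all added vertices into $R_0$ when $n$ is even and away from $R_0$ when $n$ is odd, in each case leaving the single nontrivial condition undisturbed.
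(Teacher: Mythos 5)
Your proposal is correct and rests on the same observation as the paper's proof, namely that condition $(b)$ is only active for odd $n$ and concerns $R_0$ while $(c)$ is only active for even $n$ and concerns $V(H_n)\setminus R_0$, so one can always fill a free column with a vertex of $R_1$ (odd $n$) or $R_0$ (even $n$) without activating either condition. The paper phrases this as adding $v_{i,0}$ by default and switching to $v_{i,1}$ when $n$ is odd and $(b)$ would be triggered, which produces the same set $Z$; your upfront parity-based choice is just a cleaner packaging of the identical argument.
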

\begin{proof}
  Since $W$ does not satisfy condition (a), it contains at most one
  vertex from each set $C_i$ ($i\in\upto n$). Suppose that $W\cap C_i
  = \emptyset$ for some $i$. We claim that conditions (a)--(c) are
  still violated for the set $W\cup\Setx{w}$, for some $w\in C_i$. If
  $W\cup\Setx{v_{i,0}}$ satisfies any of the conditions, it must be
  condition (b), which means that $n$ is odd. In that case,
  $W\cup\Setx{v_{i,1}}$ trivially fails to satisfy the conditions. By
  adding further vertices in this way, we arrive at a set $Z$ with the
  desired properties.
\end{proof}

Before we embark on the proof of Proposition~\ref{p:four}, it will be
convenient to introduce some terminology. Assume that $W\sub V(H_n)$
is a set which satisfies none of the conditions in
Lemma~\ref{l:five}. In addition, we will assume that 
\begin{equation}\label{eq:total}
  \text{$W$ intersects
    each $C_i$ ($i\in\upto n$) in exactly one vertex.}
\end{equation}
For each $i\in\upto n$, we will define $w_i$ to be the unique element
of $\ZZ_3$ such that $W\cap C_i = \Setx{v_{i,w_i}}$. (In the proof of
Proposition~\ref{p:four} below, we will ensure
condition~\eqref{eq:total} by appealing to Lemma~\ref{l:max}.)

We will encode the set $W$ into a sequence of signs, defined as
follows. A \emph{sign sequence} $\sigma$ is a sequence of elements of
$\ZZ_3$. We will often write `$+$' for the element 1 and `$-$' for the
element 2 (which coincides with $-1$). Thus, the sign sequence
$\sseq{\s0 \s+ \s- \s+}$ stands for the sequence $(0,1,2,1)$.

To the set $W$, we assign the sign sequence $\sigma^W = s_0\dots
s_{n-1}$, where each $s_i\in\ZZ_3$ is defined as
\begin{align*}
  s_i =
  \begin{cases}
    w_{i+1} - w_i & \text{if $0 \leq i \leq n-2$,}\\
    -w_0 - w_{n-1} & \text{if $i=n-1$.}
  \end{cases}
\end{align*}
The change of sign in the latter case reflects the fact that the
vertex $v_{n-1,j}$ is adjacent to $v_{0,-j}$ rather than $v_{0,j}$. It
may be helpful to view $H_n$ as the graph obtained from the Cartesian
product $P_{n+1}\cart K_3$ by identifying the vertex $(0,j)$ with
$(n,-j)$ for each $j\in\ZZ_3$. It is then natural to define $w_n =
-w_0$, in which case $s_{n-1}$ is precisely $w_n-w_{n-1}$.

To describe a 4-colouring of the clique $X_i$ in $\repl{H_n}W$
($i\in\upto n$), we introduce the notion of a \emph{pattern}. This is
a cyclically ordered partition of the set $\Setx{1,2,3,4}$ into three
\emph{parts}, with one part of size 2 and the remaining parts of size
1. The two colours contained in the part of size 2 are
\emph{paired}. Two patterns differing only by a cyclic shift of the
parts are regarded as identical. Given a 4-colouring $c$ of $X_i$, the
corresponding \emph{pattern at $X_i$} is
\begin{equation*}
  \pi_i(c) = \Bigl(\Setx{c(v_{i,w_i}),c(v'_{i,w_i})},\Setx{c(v_{i,w_i+1})},\Setx{c(v_{i,w_i+2})}\Bigr).
\end{equation*}
We use a more concise notation for patterns: for instance, instead of
writing $(\Setx{1,2},\Setx 3, \Setx 4)$ we write just
$\patt{12}34$. Note that a pattern does not determine the colouring
uniquely since it does not specify the order of the paired colours.

We now determine the possible combinations of patterns at $X_i$ and at
$X_{i+1}$ in a valid colouring of $Y_i$. Suppose that $c_0$ is a
colouring of $X_0$ with pattern $\patt{12}34$, and let $s =
w_1-w_0$. Consider first the case that $s=1$. It is routine to check
that for any valid extension of $c_0$ to $Y_0$, the pattern at $X_1$
is $\patt{12}34$, $\patt{14}23$ or $\patt{24}13$
(cf.~Figure~\ref{fig:patt}). Conversely, each of these patterns
determines a valid extension.

\begin{figure}
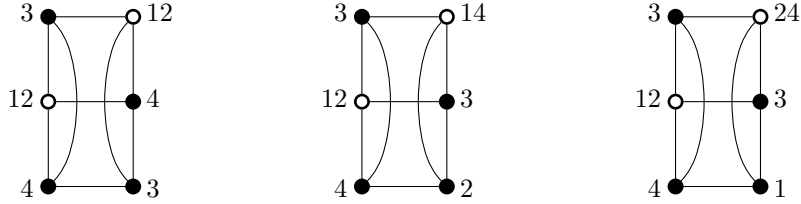

  \centering
  \hf\fig6\hf\fig4\hf\fig5\hf
  \caption{Valid colourings of $Y_0$ such that the pattern at $X_0$ is
    $\patt{12}34$. The colouring of $Y_0$ is represented in the
    induced subgraph of $H_n$ on $C_0\cup C_1$ by assigning a pair of
    colours to each vertex in $W$. These vertices are shown as
    circles, the other vertices as solid dots.}
  \label{fig:patt}
\end{figure}

Considering the other possibilities for $s$, we find that the sets of
patterns at $X_1$ corresponding to valid extensions of $c_0$ are as
follows:
\begin{center}
  \begin{tabular}{cccl}
    $\patt{12}34$ & $\patt{14}23$ & $\patt{24}13$ & \qquad if $s=1$,\\
    $\patt{12}34$ & $\patt{13}42$ & $\patt{23}41$ & \qquad if $s=-1$,\\
    $\patt{34}12$ & $\patt{34}21$ & & \qquad if $s=0$.
  \end{tabular}
\end{center}
The patterns in the first row of the above table are said to be
\emph{$+$-compatible} with $\patt{12}34$. The notions of
\emph{$-$-compatibility} and \emph{$0$-compatibility} are defined in
a similar way using the second and third row, respectively. Applying
a suitable permutation to the set of colours, we can extend these
definitions to any other pattern in place of $\patt{12}34$.

The same discussion applies just as well to patterns at $X_i$ and
$X_{i+1}$, where $1\leq i \leq n-2$. For $i=n-1$, we need to take into
account the `twist' in $Y_{n-1}$. We find that for a valid colouring
of $Y_{n-1}$, the pattern $\pi$ induced at $X_{n-1}$ and the pattern
$\rho$ induced at $X_0$ have the property that $\overline\rho$ is
$s'$-compatible with $\pi$, where $\overline\rho$ is the
\emph{reverse} of $\rho$, i.e., the pattern obtained by reversing the
order of parts in $\rho$, and $s' = -w_0-w_{n-1}$.

There is a simple description of the patterns that are $+$-compatible
with a given pattern $\pi=\patt{xy}zw$. One of them is $\pi$
itself. To obtain the other ones, choose a colour that is paired in
$\pi$ ($x$ or $y$) and move it to the preceding part of $\pi$ with
respect to the cyclic ordering. Reversing the direction of the move,
we obtain the $-$-compatible patterns. Finally, to obtain the two
$0$-compatible patterns, merge the two colours that are unpaired in
$\pi$ into one part, and put the other two colours into two parts,
choosing any of the two possible orderings.

We represent the notion of compatibility of patterns using an
auxiliary graph $D$, in which we allow both directed and undirected
edges as well as directed loops. The vertex set of $D$ is the set of
all 12 patterns. Patterns $\pi$ and $\rho$ are joined by an
undirected edge if they are $0$-compatible. There is a directed edge
from $\pi$ to $\rho$ if $\rho$ is $+$-compatible with $\pi$ (or
equivalently, if $\pi$ is $-$-compatible with $\rho$). In
particular, $D$ has a directed loop on each vertex. The graph $D$ is
shown in Figure~\ref{fig:aux} (with the loops omitted).

\begin{figure}
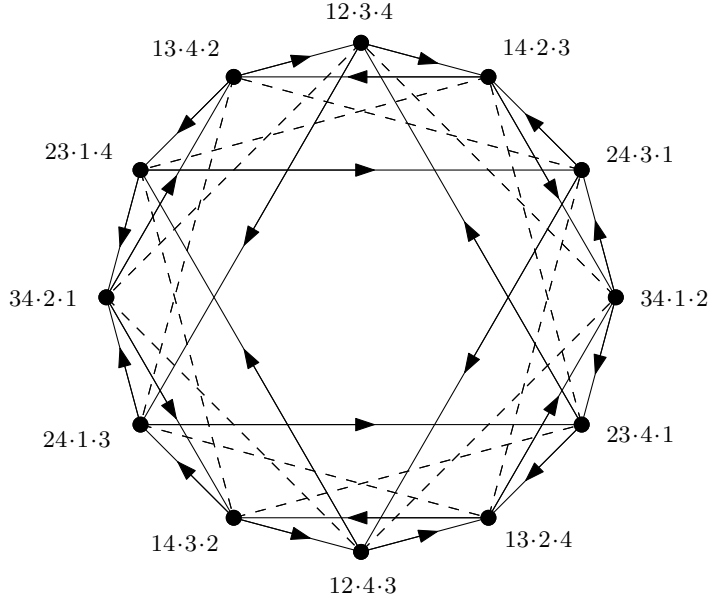

  \centering
  \fig{1}
  \caption{The auxiliary graph $D$. Directed loops at all the
    vertices are not shown.}
  \label{fig:aux}
\end{figure}

Let $\sigma = s_0\dots s_k$ be a sign sequence. A
\emph{$\sigma$-stroll} $S$ is a sequence $\pi_0\pi_1\dots \pi_{k+1}$,
where each $\pi_i$ ($0 \leq i \leq k+1$) is a vertex of $D$ and one of
the following conditions holds for each $j$ ($0 \leq j \leq k$):
\begin{itemize}
\item $s_j = 0$ and $D$ contains an undirected edge with endvertices
  $\pi_j$ and $\pi_{j+1}$,
\item $s_j = 1$ and there is a directed edge from $\pi_j$ to
  $\pi_{j+1}$,
\item $s_j = -1$ and there is a directed edge from $\pi_{j+1}$ to
  $\pi_j$.
\end{itemize}
For $s_j = \pm 1$, the directed edge is allowed to be a loop,
reflecting the fact that a pattern is both $+$-compatible and
$-$-compatible with itself.  A $\sigma$-stroll as above is said to
start at $\pi_0$ and end at $\pi_{k+1}$ (or to be a $\sigma$-stroll
from $\pi_0$ to $\pi_{k+1}$).

To illustrate the definition, if $\sigma=(\s- \s- \s+ \s0 \s+ \s-)$,
then a $\sigma$-stroll from $\patt{12}34$ to $\patt{12}43$ is
\begin{equation*}
  (\patt{12}34, \patt{13}42, \patt{34}21, \patt{14}32, \patt{23}41,
  \patt{13}24, \patt{12}43).
\end{equation*}

A sign sequence $\sigma$ is said to be \emph{reversing} if there is a
$\sigma$-stroll from $\patt{12}34$ to $\patt{34}12$. Note that by
interchanging colours 1 and 2, one can then obtain a $\sigma$-stroll
from $\patt{12}34$ to $\patt{34}21$ as well. Furthermore, $\sigma$ is
\emph{good} if there exists a $\sigma$-stroll from $\patt{12}34$ to
$\patt{12}43$. The latter terminology is justified by the following
lemma.

\begin{lemma}\label{l:good}
  If $\sigma^W$ is good, then the graph $\repl{H_n}W$ is $4$-colourable.
\end{lemma}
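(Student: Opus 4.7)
The plan is to convert a good $\sigma^W$-stroll $\pi_0,\pi_1,\ldots,\pi_n$ into an explicit 4-colouring of $\repl{H_n}W$. The key observation is that reversing the cyclic order of the parts of $\pi_0 = \patt{12}34$ yields $\patt{12}43$, so $\pi_n = \overline{\pi_0}$, matching the twisted compatibility condition established for $Y_{n-1}$ in the paragraph preceding the lemma.

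I would first colour the clique $X_0$ by an arbitrary 4-colouring realising $\pi_0$, and then process $i = 0, 1, \ldots, n-2$ in order. At step $i$, the stroll edge from $\pi_i$ to $\pi_{i+1}$ carries the sign $s_i$, so $\pi_{i+1}$ is $s_i$-compatible with $\pi_i$; the classification of compatible patterns established just before the lemma then provides a valid extension of the colouring to $X_{i+1}$ realising $\pi_{i+1}$. After these $n-1$ extensions, every edge of $\repl{H_n}W$ that lies in some $Y_i$ with $i \le n-2$ is properly coloured, so only the edges of $Y_{n-1}$ (the twisted edges joining $X_{n-1}$ to $X_0$ together with the clone edges incident with them) remain to be checked. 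The last edge of the stroll, from $\pi_{n-1}$ to $\pi_n = \overline{\pi_0}$ with sign $s_{n-1} = -w_0 - w_{n-1}$, is precisely the twisted compatibility condition, so those edges are also properly coloured.

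The subtle point I anticipate is that the compatibility discussion is framed purely at the level of patterns, whereas the step-by-step construction fixes specific colourings, and thereby independently chooses an ordering of the two paired colours at each $X_i$. I would handle this by observing that for every $i$ the transposition $v_{i,w_i} \leftrightarrow v'_{i,w_i}$ is a graph automorphism of $\repl{H_n}W$, so any proper colouring remains proper after swapping the two paired colours at a single $X_i$. Consequently, realising a prescribed pattern at $X_i$ is a condition insensitive to the ordering of paired colours there, and the pattern-level compatibilities supplied by the stroll carry over to the specific colourings produced by the construction. Assembling these facts yields a proper 4-colouring of $\repl{H_n}W$, as required.
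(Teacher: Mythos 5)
Your proposal is correct and follows essentially the same route as the paper: colour each clique $X_i$ according to the pattern $\pi_i$ of a good $\sigma^W$-stroll, use $s_i$-compatibility to verify each $Y_i$ for $i\le n-2$, and use the fact that the stroll ends at $\patt{12}43=\overline{\pi_0}$ to verify the twisted join $Y_{n-1}$. Your extra remark that swapping a vertex with its clone is an automorphism, so that realising a pattern is insensitive to the ordering of the paired colours, is a sound way to make explicit a point the paper leaves implicit.
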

\begin{proof}
  Let $\sigma^W = s_0\dots s_{n-1}$ and let $S=\pi_0\dots \pi_n$ be a
  $\sigma^W$-stroll from $\patt{12}34$ to $\patt{12}43$. For each
  $i=0,\dots,n-1$, colour the vertices of $X_i$ in such a way that the
  pattern is $\pi_i$. By the definition, each $\pi_i$ ($0 \leq i\leq
  n-2$) is $s_i$-compatible with $\pi_{i+1}$, and so $Y_i$ is properly
  coloured.

  It remains to check the colouring of $Y_{n-1}$. As observed above,
  $Y_{n-1}$ is properly coloured if the reverse of $\pi_0$ (that is,
  $\patt{12}43$) is $s_{n-1}$-compatible with $\pi_{n-1}$. This is
  ensured by the requirement that $S$ ends at $\patt{12}43$.
\end{proof}

For a sign sequence $\sigma$, we define $-\sigma$ to be the sign
sequence obtained by replacing each $-$ sign by $+$ and vice versa.

\begin{lemma}\label{l:symm}
  If $\sigma$ is good, then $-\sigma$ is good.
\end{lemma}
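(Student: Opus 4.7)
The plan is to construct a bijection $\psi$ on patterns that swaps $+$- with $-$-compatibility while fixing the two endpoints $\patt{12}34$ and $\patt{12}43$. Applying $\psi$ pointwise to the given $\sigma$-stroll will then yield a $(-\sigma)$-stroll with the same endpoints, showing that $-\sigma$ is good.

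Two symmetries of the compatibility relations will be combined. First, the pattern-reversal $\pi\mapsto\overline\pi$ already introduced in the paper: since $+$-compatibility is described as moving a paired colour to the cyclically \emph{preceding} part and $-$-compatibility as moving it to the cyclically \emph{following} part, reversing the cyclic order of both patterns interchanges $+$- and $-$-compatibility; $0$-compatibility depends only on which pair of colours is left unpaired, so it is preserved. Second, any colour permutation acts as an automorphism of $D$ preserving all three types of edges; in particular this is true of the transposition $\tau=(3\,4)$ swapping colours $3$ and $4$.

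Define $\psi(\pi)=\tau(\overline\pi)$. By the two observations, applying $\psi$ pointwise sends $\sigma$-strolls to $(-\sigma)$-strolls. Moreover, $\overline{\patt{12}34}=\patt{12}43$ and $\overline{\patt{12}43}=\patt{12}34$, so composing with $\tau$ sends each endpoint back to itself. Given a $\sigma$-stroll $\pi_0\dots\pi_n$ from $\patt{12}34$ to $\patt{12}43$, the sequence $\psi(\pi_0)\dots\psi(\pi_n)$ is thus a $(-\sigma)$-stroll between the same endpoints, proving the lemma. The only point that genuinely requires care is the claim that cyclic-order reversal swaps $+$- and $-$-compatibility; this follows directly from the cyclic description of compatibility but is worth checking on a small example (e.g.\ confirming that $\patt{14}23$ being $+$-compatible with $\patt{12}34$ corresponds to $\patt{14}32$ being $-$-compatible with $\patt{12}43$) to rule out any orientation confusion.
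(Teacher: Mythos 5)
Your proof is correct and is essentially the paper's own argument: the paper likewise observes that pattern reversal $\pi\mapsto\overline\pi$ turns directed edges of $D$ around (hence converts a $\sigma$-stroll into a $(-\sigma)$-stroll from $\patt{12}43$ to $\patt{12}34$) and then composes with the colour transposition $3\leftrightarrow 4$ to restore the endpoints. Packaging the two symmetries into the single map $\psi(\pi)=\tau(\overline\pi)$ is just a cosmetic difference, and your sanity check of the orientation is accurate.
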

\begin{proof}
  By inspecting Figure~\ref{fig:aux} or directly from the definition,
  one can see that if $D$ contains a directed edge from $\pi$ to
  $\rho$, then it also contains a directed edge from $\overline\rho$
  to $\overline\pi$, and a similar claim holds for the undirected
  edges. It follows that if $S=(\pi_0,\dots,\pi_k)$ is a
  $\sigma$-stroll, then $\overline S =
  (\overline{\pi_0},\dots,\overline{\pi_k})$ is a
  $(-\sigma)$-stroll. If $S$ is good, then $\overline S$ starts at
  $\patt{12}43$ and ends at $\patt{12}34$. Interchanging colours $3$
  and $4$ in each pattern in $\overline S$, we obtain a
  $(-\sigma)$-stroll from $\patt{12}34$ to $\patt{12}43$.
\end{proof}

Let $\sigma=s_0\dots s_{k-1}$ be a sign sequence and let $\pi$ and
$\rho$ be patterns such that $\pi\rho$ is an undirected edge of
$D$. We define a $\sigma$-stroll $S(\sigma;\pi,\rho) =
\pi_0\dots\pi_k$ by the following rule:
\begin{itemize}
\item $\pi_0 = \pi$,
\item if $s_i \neq 0$, then $\pi_{i+1}=\pi_i$ (where $0 \leq i \leq
  k-1$),
\item if $s_i = 0$, then $\pi_{i+1}$ is the vertex in
  $\Setx{\pi,\rho}$ distinct from $\pi_i$ (where $0 \leq i \leq k-1$).
\end{itemize}

Let $\sigma_1$ and $\sigma_2$ be sign sequences and let $\sigma$ be
their concatenation. If $P = (\pi_0,\dots,\pi_k)$ is a
$\sigma_1$-stroll and $R = (\rho_0,\dots,\rho_\ell)$ is a
$\sigma_2$-stroll such that $\pi_k = \rho_0$, then the
\emph{composition} of $P$ and $R$ is the $\sigma$-stroll
\begin{equation*}
  P\circ R = (\pi_0,\dots,\pi_k,\rho_1,\dots,\rho_\ell).
\end{equation*}

For any sign sequence $\sigma$, we let $z(\sigma)$ denote the number
of occurrences of the symbol $0$ in $\sigma$, reduced modulo 2. For
clarity, we omit one pair of parentheses in expressions such as
$z(\sseq{\s0 \s+ \s-})$.

\begin{observation}\label{obs:stationary}
  Let $\sigma$ be a sign sequence and $\pi,\rho$ be patterns such that
  $\pi\rho$ is an undirected edge of $D$. Then the $\sigma$-stroll
  $S(\sigma;\pi,\rho)$ starting at $\pi$ satisfies the following:
  \begin{enumerate}[\quad$(i)$]
  \item if $z(\sigma)=0$, then $S(\sigma;\pi,\rho)$ ends at $\pi$,
  \item otherwise, $S(\sigma;\pi,\rho)$ ends at $\rho$.
  \end{enumerate}
\end{observation}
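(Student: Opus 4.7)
The plan is to prove this observation by a straightforward induction on the length $k$ of $\sigma$, tracking how the current vertex of the stroll evolves. The key insight is that by construction, $S(\sigma;\pi,\rho)$ is forced to stay inside the two-element set $\Setx{\pi,\rho}$, so its behaviour is entirely governed by when it \emph{toggles} between the two patterns.

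First I would verify that $S(\sigma;\pi,\rho)$ is a well-defined $\sigma$-stroll. For each $i$ with $s_i \neq 0$, the definition sets $\pi_{i+1}=\pi_i$, and the edge required by the stroll condition is the directed loop at $\pi_i$ (which exists at every vertex of $D$). For each $i$ with $s_i=0$, the vertices $\pi_i$ and $\pi_{i+1}$ are exactly $\pi$ and $\rho$ in some order, and $\pi\rho$ is an undirected edge of $D$ by hypothesis, so the stroll condition is satisfied.

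Next I would formalise the toggling argument. By a trivial induction on $i$, every $\pi_i$ lies in $\Setx{\pi,\rho}$, and $\pi_{i+1}\neq \pi_i$ if and only if $s_i=0$. Consequently, if we let $t_i$ denote the number of indices $j<i$ with $s_j=0$, then $\pi_i = \pi$ when $t_i$ is even and $\pi_i = \rho$ when $t_i$ is odd. Applying this with $i=k$ gives $t_k = z(\sigma) \pmod 2$, so $\pi_k = \pi$ if $z(\sigma)=0$ and $\pi_k=\rho$ otherwise, which is precisely conclusions $(i)$ and $(ii)$.

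There is essentially no obstacle here: the statement is a book-keeping observation about a stroll that was \emph{defined} to toggle only on zeros, and the only substantive point is to record that every intermediate $\pi_i$ stays in $\Setx{\pi,\rho}$ so that the swap in the $s_i=0$ case is always well-defined.
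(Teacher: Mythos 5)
Your proof is correct and is exactly the argument the paper leaves implicit (the result is stated as an Observation without proof): the stroll only toggles between $\pi$ and $\rho$ at the zero entries, so the endpoint is determined by the parity of the number of zeros, i.e., by $z(\sigma)$. The additional check that the loops and the undirected edge $\pi\rho$ make $S(\sigma;\pi,\rho)$ a valid stroll is a worthwhile touch but matches what the paper takes for granted.
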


We define an order $\preceq$ on sign sequences. Let $\tau,\sigma$ be
two sign sequences, where $\sigma=s_0\dots s_k$. We define $\tau
\preceq\sigma$ if there are indices $0\leq i_0<i_1\dots<i_m\leq k$
such that:
\begin{itemize}
\item $\tau = s_{i_0}s_{i_1}\dots s_{i_m}$,
\item $z(s_0s_1\dots s_{i_0-1}) = 0$, and
\item for every $j$ such that $0\leq j \leq m-1$, $z(s_{i_j+1}\dots
  s_{i_{j+1}-1}) = 0$.
\end{itemize}

\begin{lemma}\label{l:sub}
  Let $\sigma$ and $\tau$ be sign sequences such that
  $\tau\preceq\sigma$. The following holds:
  \begin{enumerate}[\quad$(i)$]
  \item if $z(\sigma) = z(\tau)$ and $\tau$ is good, then $\sigma$ is good,
  \item if $z(\sigma) \neq z(\tau)$ and $\tau$ is reversing, then
    $\sigma$ is good.
  \end{enumerate}
\end{lemma}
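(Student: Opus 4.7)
The plan is to construct a $\sigma$-stroll directly from a witnessing $\tau$-stroll by inserting short ``stationary'' strolls into the gaps of $\sigma$ not covered by $\tau$. The definition of $\tau\preceq\sigma$ forces the pre-gap $s_0\ldots s_{i_0-1}$ and each middle gap $s_{i_j+1}\ldots s_{i_{j+1}-1}$ to have an even $0$-count, so Observation~\ref{obs:stationary}(i), applied to $S(\cdot;\pi,\rho)$ with $\rho$ any $0$-compatible neighbour of the current pattern $\pi$ (such a $\rho$ always exists by the merge-unpaired recipe for $0$-compatibility), traverses the gap and returns to $\pi$.

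Concretely, write $\sigma=s_0\ldots s_k$ and let $T=(\tau_0,\ldots,\tau_{m+1})$ be the given $\tau$-stroll, so $\tau_0=\patt{12}34$, while $\tau_{m+1}=\patt{12}43$ in case (i) and $\tau_{m+1}=\patt{34}12$ in case (ii). I would compose, in order, a stationary stroll across the pre-gap based at $\tau_0$, the transition $\tau_0\to\tau_1$ from $T$, a stationary stroll across the first middle gap based at $\tau_1$, the transition $\tau_1\to\tau_2$, and so on up to the transition $\tau_m\to\tau_{m+1}$. This leaves only the post-gap $\sigma_{\mathrm{post}}=s_{i_m+1}\ldots s_k$ to handle. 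Since the pre-gap and middle gaps each contribute $0$ to $z$ and the selected signs contribute equally to $z(\sigma)$ and $z(\tau)$, we have $z(\sigma_{\mathrm{post}})\equiv z(\sigma)-z(\tau)\pmod 2$.

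In case (i) this residue is $0$, so one more stationary stroll based at $\tau_{m+1}=\patt{12}43$ keeps us there, and the whole composition is a $\sigma$-stroll from $\patt{12}34$ to $\patt{12}43$, showing that $\sigma$ is good. Case (ii) is the main obstacle I expect: the residue is $1$, and the $\tau$-stroll only delivers us to $\patt{34}12$ rather than $\patt{12}43$. The saving observation is that $\patt{12}43$ is $0$-compatible with $\patt{34}12$; indeed, merging the unpaired colours $\{1,2\}$ of $\patt{34}12$ and placing the singletons $\{3\}, \{4\}$ in the two cyclic orders yields exactly $\patt{12}34$ and $\patt{12}43$. Hence $S(\sigma_{\mathrm{post}};\patt{34}12,\patt{12}43)$ is a legitimate $\sigma_{\mathrm{post}}$-stroll, and its odd $0$-count causes it to end at $\patt{12}43$ by Observation~\ref{obs:stationary}(ii); splicing this with the earlier pieces yields a $\sigma$-stroll from $\patt{12}34$ to $\patt{12}43$, so $\sigma$ is good.
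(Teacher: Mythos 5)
Your proposal is correct and follows essentially the same route as the paper: decompose $\sigma$ into the selected signs of $\tau$ plus gaps of even $0$-count, splice the stationary strolls $S(\cdot;\pi,\rho)$ of Observation~\ref{obs:stationary} into the gaps, and in case (ii) use the odd $0$-count of the post-gap together with the $0$-compatibility of $\patt{34}12$ and $\patt{12}43$ to land at $\patt{12}43$. The only (welcome) additions are your explicit checks that every pattern has a $0$-compatible neighbour and that $\patt{12}43$ is indeed $0$-compatible with $\patt{34}12$, which the paper leaves to the reader.
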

\begin{proof}
  (i) Suppose that $\sigma=s_0\dots s_k$ and $\tau = s_{i_0}\dots
  s_{i_m}$, where $0 \leq i_0 < \dots < i_m \leq k$. Let
  $S=(\rho_0,\dots,\rho_m)$ be a $\tau$-stroll from $\patt{12}34$ to
  $\patt{12}43$. For simplicity, set $i_{-1}=-1$ and for $j=0,\dots,m$
  let $\sigma^j$ be the subsequence of $\sigma$ from $s_{i_{j-1}+1}$
  to $s_{i_j-1}$. For each $j$, $0 \leq j \leq m$, choose a pattern
  $\epsilon_j$ such that $\rho_j\epsilon_j$ is an undirected edge of
  $D$.

  By the definition of the order $\preceq$, we have $z(\sigma^j) = 0$
  for each $j$, $0\leq j \leq m$. Observation~\ref{obs:stationary}(i)
  implies that $S(\sigma^j;\rho_j,\epsilon_j)$ is a $\sigma^j$-stroll from
  $\rho_j$ to $\rho_j$. Thus, the composition
  \begin{align*}
    S' = S(\sigma^0;\rho_0,\epsilon_0)&\circ(\rho_0,\rho_1)\circ
    S(\sigma^1;\rho_1,\epsilon_1) \circ (\rho_1,\rho_2) \circ\cdots\\
    &\circ
    S(\sigma^{m-1};\rho_{m-1},\epsilon_{m-1}) \circ (\rho_{m-1},\rho_m)
  \end{align*}
  is a valid $\sigma$-stroll from $\patt{12}34$ to $\patt{12}43$.

  Let $\sigma^{m+1}$ denote the sequence $s_{i_m+1}\dots s_k$. Then
  \begin{equation}\label{eq:tail}
    z(\sigma^{m+1}) \equiv z(\sigma) - z(\tau) \pmod 2.
  \end{equation}
  By \eqref{eq:tail}, $z(\sigma^{m+1}) = 0$ and so
  $S'':=S(\sigma^{m+1};\patt{12}43,\patt{12}34)$ is a
  $\sigma^{m+1}$-stroll from $\patt{12}43$ to $\patt{12}43$ by
  Observation~\ref{obs:stationary}(i). The $\sigma$-stroll $S' \circ
  S''$ then shows that $\sigma$ is good.

  The proof of (ii) is similar, except that $S$ is now a $\tau$-stroll
  from $\patt{12}34$ to $\patt{34}12$. Furthermore, $z(\sigma^{m+1}) =
  1$ and $S'' := S(\sigma^{m+1};\patt{34}12,\patt{12}43)$ is a
  $\sigma^{m+1}$-stroll from $\patt{34}12$ to
  $\patt{12}43$. Composing $S'$ and $S''$, we obtain a
  $\sigma$-stroll from $\patt{12}34$ to $\patt{12}43$ as required.
\end{proof}

\begin{proposition}\label{p:four}
  Let $n\geq 4$ and $W\sub V(H_n)$. If none of the conditions
  $(a)$--$(c)$ in Lemma~\ref{l:five} is satisfied, then $\repl{H_n}W$
  is $4$-colourable.
\end{proposition}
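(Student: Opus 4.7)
The plan is to apply the machinery of the preceding lemmas. By Lemma~\ref{l:max} I may first enlarge $W$ so that it meets every column $C_i$ in exactly one vertex, making the sign sequence $\sigma=\sigma^W$ well-defined, and by Lemma~\ref{l:good} it then suffices to prove that $\sigma$ is good. The strategy is to exhibit, for each admissible $\sigma$, a short template $\tau\preceq\sigma$ that lets Lemma~\ref{l:sub} finish the job: either $\tau$ is good with $z(\tau)=z(\sigma)$ (clause~(i)), or $\tau$ is reversing with $z(\tau)\neq z(\sigma)$ (clause~(ii)).

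I would first assemble a small catalogue of templates by inspecting~$D$. The ones I expect to need, each checked by exhibiting an explicit stroll, are: $(0,0)$, good with $z=0$; the four length-two sign sequences $(\pm,\pm)$, reversing with $z=0$; $(0,\epsilon,\epsilon)$, $(\epsilon,\epsilon,0)$ and $(\epsilon,0,-\epsilon)$ for $\epsilon\in\{+,-\}$, good with $z=1$; $(0,\epsilon,0,\epsilon)$, reversing with $z=0$; and the constant sequence of four $\epsilon$s, good with $z=0$. By Lemma~\ref{l:symm} only the $+$-variants need verification.

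The case analysis splits on the number $Z$ of zeros in $\sigma$. When $Z\ge 2$ and $z(\sigma)=0$, the first two zeros already realise $(0,0)\preceq\sigma$ and clause~(i) concludes. When $Z=0$, so $\sigma\in\{+,-\}^n$, either $\sigma$ contains $(+,-,+)$ or $(-,+,-)$ as a subsequence and clause~(i) applies, or $\sigma$ has the block form $(+^a,-^b)$ or $(-^a,+^b)$; in the latter, the constant length-four template fits whenever $\max(a,b)\ge 4$, leaving a finite list of residual cases with $n\le 6$ to be checked by hand. When $Z=1$ with the unique zero at position~$k$, two non-zero entries on the same side of~$k$ yield some $(\pm,\pm)\preceq\sigma$ with $z(\tau)=0\neq 1=z(\sigma)$, whence clause~(ii); when $k$ sits at an extremity I instead use one of $(0,\epsilon,\epsilon)$, $(\epsilon,\epsilon,0)$, or $(\epsilon,0,-\epsilon)$, chosen to match the signs present, and apply clause~(i). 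The remaining and most delicate case is $Z\ge 3$ with $z(\sigma)=1$, where a $z=1$ good template or the reversing template $(0,\epsilon,0,\epsilon)$ is used.

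The principal obstacle is the parity bookkeeping built into $\preceq$: the condition that consecutive selected indices be separated by an even number of zeros means that a template with the correct letters may fail to appear as a $\preceq$-subsequence. Precisely those $\sigma$ for which no template in the catalogue fits turn out to be the ones matching the forms excluded by conditions~(b) and~(c)—namely $\sigma=(0^n)$, $(\epsilon,0^{n-2},\epsilon)$, $(0^{n-2},\epsilon,-\epsilon)$ and $(0^a,\epsilon,-\epsilon,0^{n-2-a})$ for $n$ odd, and $(0^k,\epsilon,0^{n-k-1})$ for $n$ even. Verifying this matching case by case is where the combinatorial work of the proof is concentrated.
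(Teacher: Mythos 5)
Your overall plan coincides with the paper's: reduce via Lemmas~\ref{l:max} and~\ref{l:good} to showing that $\sigma^W$ is good, and certify goodness by exhibiting a short good or reversing template $\tau\preceq\sigma^W$ and invoking Lemma~\ref{l:sub}. Your treatment of the cases with an even positive number of zeros and with no zeros matches the paper. The gaps are concentrated in the case $z(\sigma^W)=1$, which is exactly where the real work lies. First, your catalogue is wrong: $\sseq{\s+ \s-}$ and $\sseq{\s- \s+}$ are \emph{not} reversing (nor good). The out-neighbours of $\patt{12}34$ in $D$ are $\patt{12}34$, $\patt{14}23$, $\patt{24}13$, while those of $\patt{34}12$ are $\patt{34}12$, $\patt{23}41$, $\patt{24}31$; these sets are disjoint (note $\patt{24}13\neq\patt{24}31$), so no $\sseq{\s+ \s-}$-stroll from $\patt{12}34$ reaches $\patt{34}12$, and the same happens for $\patt{34}21$. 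Only $\sseq{\s+ \s+}$ and $\sseq{\s- \s-}$ are reversing. Second, even where the template is genuinely reversing, your placement can violate the definition of $\preceq$: if the two chosen non-zero entries lie \emph{after} the unique zero, the prefix $s_0\dots s_{i_0-1}$ contains that zero, so its $z$-value is $1$ rather than $0$ and $\tau\not\preceq\sigma$. Hence the step ``two non-zero entries on the same side of $k$ yield some $(\pm,\pm)\preceq\sigma$\dots whence clause~(ii)'' fails on both counts; $\sigma=\sseq{\s+ \s- \s0 \s+}$, for example, is not disposed of by the argument as you state it.

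The most serious gap is the subcase of an odd number of zeros at least three, which you dispatch in a single sentence. This is the paper's Case~3, and it needs ingredients absent from your sketch: the templates $\sseq{\s0 \s+ \s0 \s-}$ (reversing) and $\sseq{\s0 \s+ \s0 \s0 \s-}$ (good), neither of which is in your catalogue, are required when the non-zero signs separated by zeros alternate with the wrong parity of intervening zeros; the identity $\sum_i s_i=w_0$, combined with the exclusion of conditions (b) and (c) of Lemma~\ref{l:five}, is needed to guarantee that $\sigma^W$ does not degenerate into an all-zero tail, without which no second non-zero entry exists to complete a template; and a normalization $s_0=0\neq s_1=+$ (via a symmetry of $H_n$ and Lemma~\ref{l:symm}) is needed to keep the prefix-parity condition of $\preceq$ under control. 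Your closing observation that the unmatchable sequences should be exactly those excluded by (b) and (c) is the right intuition, but verifying it is the entire content of the hardest case, and with the catalogue and case split as written the argument does not go through.
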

\begin{proof}
  By Lemma~\ref{l:max}, there is a set $Z$ such that $W\sub Z$ and
  $Z$ intersects each set $C_i$ in precisely one vertex. Since
  $\repl{H_n}Z$ contains $\repl{H_n}W$ as a subgraph, it is
  sufficient to prove the proposition under the
  assumption~\eqref{eq:total}.

  Let us therefore assume that \eqref{eq:total} holds for $W$, so the
  ensuing discussion applies. We retain its notation and
  definitions. By analyzing several cases, we will show that
  $\sigma^W$ is good, so the 4-colourability of $\repl{H_n}W$ follows
  from Lemma~\ref{l:good}. For the sake of a contradiction, suppose
  that $\sigma^W$ is not good.

  \begin{xcase}{$\sigma^W$ contains a nonzero even number of
      occurrences of the symbol 0.}%
    Considering the first two occurrences of 0 in $\sigma^W$, we find
    that $(\s0 \s0) \preceq \sigma^W$. Since $(\s0 \s0)$ is good
    (cf. Table~\ref{tab:good}) and $z(\s0 \s0) = 0 = z(\sigma^W)$,
    Lemma~\ref{l:sub}(i) implies that $\sigma^W$ is good, a
    contradiction.
  \end{xcase}

  \begin{table}
    \centering
    \begin{tabular}{l|l}
      \hspace{\fill}$\sigma$\hspace{\fill} & \hspace{\fill}$S$\hspace{\fill} \\\hline
      $\sseq{\s+ \s- \s+}$ & $\patt{12}34$, $\patt{14}23$, $\patt{24}31$, $\patt{12}43$ \\
      $\sseq{\s+ \s+ \s+ \s+}$ & $\patt{12}34$, $\patt{14}23$, $\patt{34}12$, $\patt{24}31$, $\patt{12}43$\\
      $\sseq{\s+ \s+ \s+ \s-}$ & $\patt{12}34$, $\patt{14}23$, $\patt{13}42$, $\patt{23}14$, $\patt{12}43$\\
      $\sseq{\s+ \s+ \s- \s-}$ & $\patt{12}34$, $\patt{14}23$, $\patt{34}12$, $\patt{13}24$, $\patt{12}43$\\
      $\sseq{\s+ \s- \s- \s-}$ & $\patt{12}34$, $\patt{14}23$, $\patt{24}31$,
      $\patt{23}14$, $\patt{12}43$\\
      $\sseq{\s0 \s0}$ & $\patt{12}34$, $\patt{34}12$, $\patt{12}43$\\ 
      $\sseq{\s0 \s+ \s+}$ & $\patt{12}34$, $\patt{34}12$, $\patt{24}31$,
      $\patt{12}43$\\
      $\sseq{\s0 \s+ \s0 \s0 \s-}$ & $\patt{12}34$, $\patt{34}12$, $\patt{23}41$, $\patt{14}32$, $\patt{23}14$, $\patt{12}43$
    \end{tabular}
    \caption{Some good sign sequences $\sigma$ and corresponding $\sigma$-strolls $S$.}
    \label{tab:good}
  \end{table}

  \begin{xcase}{$\sigma^W$ contains no occurrence of the symbol $0$.}%
    In view of Lemma~\ref{l:symm}, we may assume that $s_0 = +$. If
    $(\s+ \s- \s+)\preceq\sigma^W$, then $\sigma^W$ is good by
    Lemma~\ref{l:sub}(i) and the fact that $(\s+ \s- \s+)$ is good
    (see Table~\ref{tab:good}). Thus, $(\s+ \s- \s+) \not\preceq
    \sigma^W$. Since $n\geq 4$, we may consider the subsequence
    $\sigma' = (s_0,s_1,s_2,s_3)$ of $\sigma^W$ of length 4. To avoid
    an occurrence of the sequence $(\s+ \s- \s+)$, we necessarily have
    \begin{equation*}
      \sigma' \in \Setx{(\s+ \s+ \s+ \s+),(\s+ \s+ \s+ \s-),(\s+ \s+
        \s- \s-),(\s+ \s- \s- \s-)}.
    \end{equation*}
    Table~\ref{tab:good} shows that each possible value for $\sigma'$
    is a good sign sequence. Since $\sigma'\preceq\sigma$ and
    $z(\sigma') = 0 = z(\sigma^W)$, $\sigma^W$ is good by
    Lemma~\ref{l:sub}(i). This is a contradiction.
  \end{xcase}

  \begin{table}
    \centering
    \begin{tabular}{l|l}
      \hspace{\fill}$\sigma$\hspace{\fill} & \hspace{\fill}$S$\hspace{\fill} \\\hline
      $\sseq{\s0 \s+ \s0 \s+}$ & $\patt{12}34$, $\patt{34}12$, $\patt{23}41$,
      $\patt{14}23$, $\patt{34}12$\\
      $\sseq{\s0 \s+ \s0 \s-}$ & $\patt{12}34$, $\patt{34}21$, $\patt{13}42$,
      $\patt{24}31$, $\patt{34}12$\\
    \end{tabular}
    \caption{Some reversing sign sequences $\sigma$ and
      corresponding $\sigma$-strolls $S$. The $\sigma$-strolls to
      $\patt{34}21$ can be obtained by interchanging colours $1$ and
      $2$ in all the patterns.}
    \label{tab:reversing}
  \end{table}

  \begin{xcase}{$z(\sigma^W) = 1$.}%
    Applying a suitable symmetry of the graph $H_n$, and using the
    fact that $W$ does not satisfy conditions (b), (c) in
    Lemma~\ref{l:five}, we may assume that $s_0 = 0 \neq s_1$. In view
    of Lemma~\ref{l:symm}, it may further be assumed that $s_1 = +$.
    
    The sequence $\sseq{\s0 \s+ \s+}$ is good and we have
    $z(\sseqx{\s0 \s+ \s+}) = z(\sigma^W)$. Consequently, $\sseq{\s0
      \s+ \s+} \not\preceq \sigma^W$, and by symmetry, $(\s0 \s- \s-)
    \not\preceq \sigma^W$. In particular, none of $s_2,s_3$ is the
    symbol $+$ and at least one of $s_2,s_3$ is different from $-$. It
    follows that $0\in\Setx{s_2,s_3}$. Choose the least $j$ such that
    $j \geq 2$ and $s_j = 0$.

    We claim that there is $k > j$ such that $s_k \neq 0$. Suppose the
    contrary. Since the sum of all $s_i$ ($i\in\upto n$) is
    \begin{equation*}
      \sum_{i=0}^{n-1} s_i = (w_1-w_0)+(w_2-w_1)+\dots+(w_{n-1}-w_{n-2})+(-w_0-w_{n-1}) = w_0,
    \end{equation*}
    we find that there are two possibilities: either $\sigma^W =
    \sseq{\s0 \s+ \s- \s0 \s0\cdots \s0}$ and $w_0 = 0$, or $\sigma^W =
    \sseq{\s0 \s+ \s0 \s0\cdots \s0}$ and $w_0 = 1$. In the first case, however, $W$
    would satisfy condition (b) in Lemma~\ref{l:five}, while in the
    second case, condition (c) would be satisfied, a contradiction.

    Let us choose the least $k$ such that $k > j$ and $s_k \neq
    0$. Assume first that $s_k = +$. This implies that $k-j$ is odd,
    since otherwise $\sseq{\s0 \s+ \s+} \preceq \sigma^W$ and as we
    have seen, this would mean that $\sigma^W$ is good. However, if
    $k-j$ is odd, then $\sseq{\s0 \s+ \s0 \s+}\preceq\sigma^W$ and we
    get a contradiction with Lemma~\ref{l:sub}(ii) as $\sseq{\s0 \s+
      \s0 \s+}$ is reversing (cf. Table~\ref{tab:reversing}) and
    $z(\sseqx{\s0 \s+ \s0 \s+}) \neq z(\sigma^W)$.

    It remains to consider the possibility that $s_k = -$. If $k-j$ is
    odd, then for the reversing sequence $\sseq{\s0 \s+ \s0 \s-}$ we
    have $\sseq{\s0 \s+ \s0 \s-} \preceq \sigma^W$ and we obtain a
    contradiction with Lemma~\ref{l:sub}(ii) again. Thus, $k-j$ is
    even. In this case, we find $\sseq{\s0 \s+ \s0 \s0 \s-} \preceq
    \sigma^W$. As we can see from Table~\ref{tab:good}, $\sseq{\s0 \s+
      \s0 \s0 \s-}$ is good. Furthermore, $z(\s0 \s+ \s0 \s0 \s-) =
    z(\sigma^W)$, so $\sigma^W$ is good by Lemma~\ref{l:sub}(i), a
    contradiction.

    The discussion of Case 3, as well as the proof of
    Proposition~\ref{p:four}, is complete.
  \end{xcase}
\end{proof}

Theorem~\ref{t:main} is now an immediate consequence of
Lemma~\ref{l:five} and Proposition~\ref{p:four}.

We conclude this section by pointing out that the graph $H_4$ is the
only counterexample to Conjecture~\ref{conj:main} among edge-critical
graphs on up to 12 vertices, as was shown by a computer search using a
list of edge-critical graphs provided in~\cite{Roy}.


\section{Connection to monomial ideals}
\label{sec:algebra}

As mentioned in Section~\ref{sec:intro}, Conjecture~\ref{conj:main}
was motivated by questions arising from commutative algebra. It turns
out that the graph $H_4$ serves as a counterexample for two other
problems on the properties of square-free monomial ideals which we
state in this section. For the terms not defined here, as well as for
more information on commutative algebra and its relation to
combinatorics, see~\cite{MS-combinatorial}. Monomial ideals are the
subject of the monograph~\cite{HH-monomial}.

Let $R$ be a commutative Noetherian ring and $I \subseteq R$ an
ideal. A prime ideal $P$ is \emph{associated} to $I$ if there exists
an element $m \in R$ such that $P = I:\langle m \rangle$ (the ideal
quotient of $I$ and $\langle m \rangle$). The \emph{set of associated
  prime ideals} (associated primes) is denoted by
$\Ass(I)$. Brodmann~\cite{Bro-asymptotic} showed that $\Ass(I^s) =
\Ass(I^{s+1})$ for all sufficiently large $s$. The ideal $I$ is said
to have the \emph{persistence property} if
\begin{equation*}
  \Ass(I^s) \subseteq \Ass(I^{s+1})  
\end{equation*}
for all $s \geq 1$.

Let $k$ be a fixed field and $R = k[x_1,\dots,x_n]$ a polynomial ring
over $k$. An ideal in $R$ is \emph{monomial} if it is generated by a
set of monomials. A monomial ideal is \emph{square-free} if it has a
generating set of monomials where the exponent of each variable is at
most $1$. The question that motivated Francisco et
al.~\cite{FHT-conjecture} to pose Conjecture~\ref{conj:main} is the
following one (see~\cite[Question~3.28]{VT-beginners},
\cite[Question~4.16]{MV-edge} or \cite{HRV-stable,MMV-associated}):
\begin{problem}\label{prob:persistence}
  Do all square-free monomial ideals have the persistence property?
\end{problem}

Francisco et al.~\cite{FHT-conjecture} proved that if
Conjecture~\ref{conj:main} holds, then the answer to
Problem~\ref{prob:persistence} is affirmative. While our
counterexample to Conjecture~\ref{conj:main} does not necessarily
imply a negative answer to Problem~\ref{prob:persistence}, the cover
ideal of $H_4$ does in fact show that the answer is negative.

Given a graph $G$, a \emph{transversal} (or \emph{vertex cover}) of
$G$ is a subset $T \subseteq V(G)$ such that every edge of $G$ has an
end vertex in $T$. If $V(G)=\{x_1, \ldots, x_n\}$, we can associate
each $x_i$ with the variables in the polynomial ring $k[x_1, \ldots,
x_n]$. The \emph{cover ideal} $J(G)$ is the ideal generated by all
inclusion-wise minimal transversals of $G$.

Let $J=J(H_4)$ denote this cover ideal in the polynomial ring $R =
k[x_1,\ldots,x_{12}]$, where $H_4$ is the graph on 12 vertices defined
in Section~\ref{sec:counterexample}. Using the commutative algebra
program Macaulay2~\cite{GS-macaulay2}, we can compute the set of
associated primes of $J^3$ and $J^4$.  By comparing the output, one
finds that
\begin{equation*}
  \Ass(J^4) = \Ass(J^3) - \Setx{M},
\end{equation*}
where $M$ is the maximal ideal of $R$. In particular:
\begin{theorem}\label{t:persistence}
  The cover ideal $J(H_4)$ does not have the persistence property.
\end{theorem}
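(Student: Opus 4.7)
The statement reduces to showing $M\in\Ass(R/J^3)$ and $M\notin\Ass(R/J^4)$, where $M=(x_1,\dots,x_{12})$ is the irrelevant maximal ideal of $R$: together these two memberships violate the inclusion $\Ass(J^3)\subseteq\Ass(J^4)$ at $M$. The plan is to exploit the combinatorial criterion of Francisco, H\`a and Van Tuyl~\cite{FHT-conjecture}: for the cover ideal $J(G)$ of a graph $G$, the maximal ideal $M$ lies in $\Ass(R/J(G)^s)$ if and only if there is some $W\subseteq V(G)$ such that the replicated graph $\repl{G}{W}$ is $(s+1)$-critical.

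With this criterion in hand the verification is essentially a one-liner in each direction. For $s=3$, choose $W=\emptyset$ and note that $H_4$ is itself $4$-critical by Gallai's construction; hence $M\in\Ass(R/J^3)$. For $s=4$, the criterion would require some $W$ making $\repl{H_4}{W}$ be $5$-critical, which is precisely what Theorem~\ref{t:main} (applied at $n=4$) rules out for every $W\subseteq V(H_4)$; hence $M\notin\Ass(R/J^4)$. The non-inclusion $\Ass(J^3)\not\subseteq\Ass(J^4)$ is immediate, and since the non-maximal associated primes of cover-ideal powers persist for standard reasons, one in fact recovers the sharper equality $\Ass(J^4)=\Ass(J^3)\setminus\{M\}$ announced in the paper.

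The main obstacle I anticipate is confirming that the Francisco-H\`a-Van Tuyl characterization is available in the precise ``iff'' form used above: the implication ``critical replication exists $\Rightarrow M\in\Ass$'' is the easy half, while the converse ``$M\in\Ass\Rightarrow$ some $\repl{G}{W}$ is $(s+1)$-critical'' is a more delicate colouring-to-algebra translation and is in fact closely related to Conjecture~\ref{conj:main} itself. If that converse is not on hand in the form required, the fallback is what the authors actually do: verify $M\in\Ass(R/J^3)$ and $M\notin\Ass(R/J^4)$ directly by computing both associated-prime sets in Macaulay2~\cite{GS-macaulay2}. The only practical difficulty there is the size of $J^3$ and $J^4$ in $12$ variables, which can be mitigated by quotienting out by the $\ZZ_3$-symmetry of $H_4$ or by restricting attention to the $M$-primary component via a depth or local-cohomology computation on $R/J^4$.
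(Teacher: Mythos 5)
Your step $M\in\Ass(R/J^3)$ is fine: Francisco, H\`a and Van Tuyl do prove that a $k$-critical graph forces the maximal ideal into $\Ass(R/J^{k-1})$, and $H_4$ is $4$-critical. The genuine gap is in the step $M\notin\Ass(R/J^4)$. The characterization of $\Ass(R/J(G)^s)$ in \cite{FHT-conjecture} is not stated in terms of replications of \emph{sets} $W\sub V(G)$: for the $s$-th ordinary power it allows each vertex to be cloned several times (a multiset of duplications, with multiplicities up to $s-1$) --- this is already forced by small examples such as $G=K_3$, where detecting $M\in\Ass(R/J^6)$ requires $K_7$, i.e.\ a vertex cloned twice. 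So for $s=4$ you must exclude $5$-criticality among all replications in which a vertex of $H_4$ may receive up to three clones, whereas Theorem~\ref{t:main} only treats $W\sub V(H_4)$, i.e.\ at most one clone per vertex. As written, your ``one-liner'' therefore does not yield $M\notin\Ass(R/J^4)$. This is precisely why the paper states that the counterexample to Conjecture~\ref{conj:main} ``does not necessarily imply a negative answer'' to the persistence question, and why its actual proof of Theorem~\ref{t:persistence} is exactly the computation you relegate to a fallback: Macaulay2 is used to compute $\Ass(J^3)$ and $\Ass(J^4)$ outright and to read off $\Ass(J^4)=\Ass(J^3)-\Setx{M}$.

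The gap is plausibly repairable, but the repair is not in your write-up: if some vertex of $H_4$ receives two or more clones, or if two vertices of the same column are each cloned, the corresponding column clique has at least five vertices, so the replicated graph properly contains $K_5$ and cannot be $5$-critical (the argument of Lemma~\ref{l:five}(a)); the surviving multiset replications are then exactly the sets handled by Theorem~\ref{t:main}. Even so, you would need to quote the Francisco--H\`a--Van Tuyl characterization in its precise form rather than the set-based ``iff'' you assumed. Finally, your closing remark that the non-maximal associated primes ``persist for standard reasons'', used to recover the equality $\Ass(J^4)=\Ass(J^3)\setminus\Setx{M}$, is unsupported: persistence for the primes $P_S$ with $S\subsetneq V(H_4)$ is the same open problem applied to induced subgraphs, and the paper obtains the equality only by comparing the two computed lists.
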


The second question concerns the depth function of monomial ideals. If
$I$ is an ideal in $R$, then the \emph{depth function} of $I$ is the
function $f:\mathbb N \rightarrow \mathbb N$ defined by 
\begin{equation*}
  f(s) = \depth(R/I^s),
\end{equation*}
where $\depth(\cdot)$ is the depth of a ring as defined,
e.g., in~\cite[Chapter~6]{Mat-commutative}.

Herzog and Hibi~\cite{HH-depth} noted that the depth function of most
monomial ideals is non-increasing, but they constructed examples where
this is not the case (for instance, one where the depth function is
non-monotone). They asked the following question:
\begin{problem}\label{prob:depth}
  Do all \emph{square-free} monomial ideals have a non-increasing depth
  function?
\end{problem}
(See also~\cite{BHH-monomial,HRV-stable}.) As noted
in~\cite{BHH-monomial}, the question of Problem~\ref{prob:depth} is a
natural one since a monomial ideal $I$ satisfies the persistence
property if all monomial localisations of $I$ have a non-increasing
depth function. According to~\cite{BHH-monomial}, a positive answer
was `expected'.

However, the cover ideal of $H_4$ again provides a
counterexample. Using Macaulay2 we find that
\begin{equation*}
  \depth (R/J^3) = 0 < 4 = \depth (R/J^4),
\end{equation*}
so we have the following:
\begin{theorem}\label{t:depth}
  The depth function of the cover ideal $J(H_4)$ is not
  non-increasing.
\end{theorem}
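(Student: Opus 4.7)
The plan is to verify the claim by direct computation, since the theorem is a concrete inequality about the depth function of one specific ideal. The essential observation is that the associated-prime data from Theorem~\ref{t:persistence} already does most of the work: that computation shows the graded maximal ideal $M := \langle x_1, \dots, x_{12} \rangle$ belongs to $\Ass(J^3) \sm \Ass(J^4)$. Combining this with the standard dictionary $\depth(R/I) = 0 \iff M \in \Ass(I)$ (which holds because the set of zerodivisors on $R/I$ is the union of the associated primes of $I$, and by prime avoidance $M$ sits inside this union iff $M$ is itself associated) yields $\depth(R/J^3) = 0$ and $\depth(R/J^4) \geq 1$. This strict inequality alone violates non-increasingness at $s=3$, which is exactly what the theorem asserts.

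To pin down the sharper value $\depth(R/J^4) = 4$ stated in the proof, I would run one additional Macaulay2 computation --- the minimal graded free resolution of $R/J^4$ --- and read off the depth via the Auslander--Buchsbaum formula $\depth(R/J^4) = 12 - \operatorname{pd}(R/J^4)$. A by-product of this resolution also provides an independent verification of the associated-prime count from Theorem~\ref{t:persistence}, via the nonvanishing pattern of local cohomology, which is reassuring given that the whole argument rests on a software output.

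The main obstacle is computational cost rather than mathematical subtlety. The cover ideal $J(H_4)$ is generated by the indicator monomials of the minimal transversals of a $12$-vertex $4$-regular graph, the smallest of which already have degree $12 - \alpha(H_4) = 8$, so $J$ itself has many generators of fairly large degree and $J^4$ carries a substantial Gr\"obner basis. In practice, I would exploit the vertex-transitive symmetries of $H_4$ together with the primary decomposition $J = \bigcap_{uv \in E(H_4)} \langle u, v \rangle$ (an intersection of $24$ ideals, one per edge) both to keep Macaulay2's memory footprint tractable and to cross-check the resulting depths and associated primes against a symmetry-reduced recomputation.
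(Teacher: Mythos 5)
Your proposal is correct, and it reaches the theorem by a slightly different (and in one respect more economical) route than the paper. The paper's proof is a single Macaulay2 computation reporting $\depth(R/J^3)=0<4=\depth(R/J^4)$. You instead observe that the computation already performed for Theorem~\ref{t:persistence} --- namely $M\in\Ass(J^3)\sm\Ass(J^4)$ --- combined with the standard criterion that $\depth(R/I)=0$ if and only if $M\in\Ass(I)$ (via prime avoidance and the maximality of $M$), immediately yields $\depth(R/J^3)=0<\depth(R/J^4)$, which is all the theorem asserts. This is a genuine logical shortcut: it makes Theorem~\ref{t:depth} a corollary of the associated-primes computation rather than an independent software output, and it makes transparent why the failure of persistence at $M$ and the failure of monotonicity of depth occur at the same power. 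What it does not give for free is the exact value $\depth(R/J^4)=4$ quoted in the paper; your plan to obtain it from the minimal free resolution via Auslander--Buchsbaum is the standard and correct way to do so, though it is strictly optional for the statement as given. Your remarks on exploiting the decomposition $J=\bigcap_{uv\in E(H_4)}\langle u,v\rangle$ and the symmetries of $H_4$ to tame the computation are sensible implementation details; they do not affect the correctness of the argument.
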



\section{Acknowledgements}

We would like to thank Chris Francisco, T\`{a}i H\`{a}, and Adam Van
Tuyl for their many helpful comments and suggestions, and particularly
for pointing out that the cover ideal of $H_4$ provides negative
answers to Problems~\ref{prob:persistence} and~\ref{prob:depth}.


\end{document}